\theoremstyle{plain}
\newtheorem{theorem}[equation]{Theorem}
\newtheorem{lemma}[equation]{Lemma}
\theoremstyle{definition}
\newtheorem{define}[equation]{Definition}
\newtheorem{remark}[equation]{Remark}
\newtheorem{question}[equation]{Question}
\newcommand{\IR}{\mathbb{R}}
\newcommand{\IZ}{\mathbb{Z}}
\title{Extended Graph $4$-Manifolds, and Einstein Metrics}
\author{\small{Luca F. Di Cerbo}\footnote{Supported in part by NSF grant DMS-2104662} \\ \scriptsize{University of Florida}\\ \footnotesize{\textsf{ldicerbo@ufl.edu}}}
\date{}
\begin{document}

\maketitle


\begin{abstract}
We show that extended graph $4$-manifolds (as defined by Frigerio-Lafont-Sisto in \cite{LafontBook}) do not support Einstein metrics.  
\end{abstract}

\vspace{10 cm}


\tableofcontents

\vspace{1cm}


\section{Introduction}

The study of the interplay between curvature and topology is the aim of a considerable part modern Riemannian geometry. The study of Einstein metrics is currently an active line of research in the field; see for example the authoritative book of Besse \cite{Bes87}, and the classical survey by LeBrun and Wang \cite{LW01}. Recall that a Riemannian manifold $(M, g)$ is said to be Einstein if its Ricci tensor is proportional to the metric:
\[
Ric_g = \lambda g,
\]
where $\lambda\in\IR$ is a constant. In real dimension two and three this condition is equivalent to the constancy of the sectional curvature, so that the study of Einstein metrics in these dimensions reduces to the study of real space-forms. As a result, most $3$-manifolds do not admit Einstein metrics! Indeed, it follows from the works of Thurston, Hamilton, and Perelman (see for example the survey by Lott and Kleiner \cite{LK08}) that any irreducible closed 3-manifolds can be manufactured as a graph of Seifert and real-hyperbolic pieces (the vertices) glued along incompressible two dimensional tori (the edges). In other words, any 3-manifold whose graph-like structure in not a single vertex cannot support an Einstein metric.

In higher dimensions, the idea of manufacturing interesting closed manifolds by imitating the construction of graph-like 3-manifolds has appeared in many different contexts. In particular, the idea of doubling real-hyperbolic manifolds with cusps has been extensively studied, see for example \cite{Hei76}, \cite{Ont02}, and \cite{AF09}. Recently, Frigerio-Lafont-Sisto in \cite{LafontBook} have identified and studied a large class of higher dimensional manifolds which nicely generalizes many of the 3-dimensional constructions. They call these manifolds \emph{extended graph manifolds}, see Definition 0.2 in \cite{LafontBook}. Roughly speaking, they are decomposed into finitely many pieces (the vertices), each vertex is a manifold with boundary tori (the edges) and the various pieces are glue together via affine diffeomorphisms. Moreover, the interior of each vertex is diffeomorphic either to a finite volume real-hyperbolic manifold with toral cusps (the pure pieces), or to the product of a standard torus with a lower dimensional finite volume real-hyperbolic manifold with toral cusps (the product or Seifert-like pieces). The goal of this paper is to show that the non-existence result for Einstein metrics on 3-manifolds with a non-trivial graph-like structure carries over to dimension four.

\begin{theorem}\label{maint}
	Closed extended graph $4$-manifolds do not support Einstein metrics.
\end{theorem}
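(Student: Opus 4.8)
The plan is to combine the four-dimensional Gauss--Bonnet and signature formulas with a sharp volume comparison coming from the hyperbolic pieces. First I would reduce to negative Einstein constant. If $Ric_g=\lambda g$ with $\lambda>0$, Myers' theorem forces $\pi_1(M)$ finite, whereas the $\pi_1$-injective pieces of an extended graph manifold (the hyperbolic bases of the product pieces and the flat gluing tori) make $\pi_1(M)$ infinite and non--virtually--abelian; the Ricci--flat case $\lambda=0$ is excluded because a closed Ricci--flat $4$--manifold is finitely covered by a flat torus or a $K3$ surface, neither of which has such a fundamental group. With $\lambda<0$ fixed I would record the two characteristic numbers. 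By Novikov additivity, together with the vanishing of the signature defect of the flat gluing tori and the vanishing of $\tau$ on each piece (product pieces since $\tau(T^k\times N)=0$, pure pieces since constant curvature makes the Weyl tensor, hence the Pontryagin density, vanish), one gets $\tau(M)=0$. Since $\chi(T^3)=0$ the Euler characteristic is additive over the pieces, and as $\chi(T^k\times N)=0$ only the pure hyperbolic pieces $V_i$ contribute, so Chern--Gauss--Bonnet for cusped hyperbolic $4$--manifolds gives
\[
\chi(M)=\sum_i\chi(\overline{V_i})=\frac{3}{4\pi^2}\sum_i\mathrm{vol}(V_i)=:\frac{3}{4\pi^2}\,\mathrm{vol}^{\mathrm{pure}}\ \ge\ 0.
\]

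If $M$ has no pure piece then $\chi(M)=\tau(M)=0$, so the Hitchin--Thorpe inequality $2\chi\ge 3|\tau|$ holds with equality. Tracing this through the curvature expressions for $2\chi\pm 3\tau$ forces $W\equiv 0$ and $s\equiv 0$, i.e.\ $g$ is flat, which again contradicts the non--virtually--abelian fundamental group. This is the exact analogue of the three--dimensional mechanism and is the easy case.

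The crux is the case $\chi(M)>0$. Normalizing $Ric_g=-3g$, the Einstein Gauss--Bonnet formula reads
\[
\chi(M)=\frac{1}{8\pi^2}\int_M\Big(|W|^2+\frac{s^2}{24}\Big)\,dV\ \ge\ \frac{3}{4\pi^2}\,\mathrm{Vol}(M,g),
\]
with equality if and only if $W\equiv 0$, i.e.\ $(M,g)$ has constant curvature $-1$; combining with the computation above yields $\mathrm{Vol}(M,g)\le\mathrm{vol}^{\mathrm{pure}}$. I would then play this against a \emph{sharp} lower bound for the Einstein volume: since $Ric_g\ge -3g$, the Besson--Courtois--Gallot barycenter method, applied through a nonzero--degree map onto a hyperbolic model assembled from the pure pieces and combined with the Gromov--Thurston proportionality $\|X\|=\mathrm{vol}(X)/v_4$ and the Frigerio--Lafont--Sisto computation of simplicial volume, should give $\mathrm{Vol}(M,g)\ge\mathrm{vol}^{\mathrm{pure}}$, strictly so as soon as a product piece carries positive hyperbolic volume. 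In the presence of product pieces this already contradicts $\mathrm{Vol}(M,g)\le\mathrm{vol}^{\mathrm{pure}}$; in their absence one obtains equality, forcing $(M,g)$ to be closed real--hyperbolic, which is impossible because $\pi_1(M)$ contains the $\IZ^3$ of a gluing torus and so fails to be word--hyperbolic.

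The main obstacle is precisely this sharp volume comparison. The crude Bishop--Gromov bound $\|M\|\le (n-1)^n n!\,\mathrm{Vol}(M,g)$ carries a constant far too lossy to force rigidity, so the argument must extract the sharp constant $v_4$, equivalently must realize the hyperbolic volume of the pieces as a genuine lower bound for the Einstein volume. Constructing the natural map to the hyperbolic model and controlling its Jacobian in the cusped, glued setting---so that the equality case propagates to constant curvature---is the technical heart of the proof.
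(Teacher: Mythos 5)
Your proposal follows the same skeleton as the paper's proof: reduce to negative Einstein constant, compute $\tau(M)=0$ and $\chi(M)=\tfrac{3}{4\pi^2}\sum_i Vol_{g_{-1}}(V_i)$, kill the no-pure-pieces case through the Hitchin--Thorpe equality, and in the presence of pure pieces play a sharp volume lower bound against the Gauss--Bonnet upper bound, with the rigidity case excluded by the $\IZ^3\subset\pi_1(M)$ coming from a cusp torus. Several of your sub-steps are sound, and two are even cleaner than the paper's: your no-pure-pieces argument (Einstein plus $\chi(M)=0$ forces $W\equiv 0$, $s\equiv 0$, hence flat, straight from Gauss--Bonnet) avoids Hitchin's classification, and your derivation of $\tau(M)=0$ by Novikov additivity (using that the hyperbolic pieces have vanishing Pontryagin density and that the $\eta$-invariant of a flat $3$-torus vanishes) replaces the paper's collapsing-sequence argument of Lemma \ref{vanishing}.

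However, the step you yourself flag as the ``technical heart'' is a genuine gap, and the route you sketch for it would not succeed as described. The needed statement is: every metric $g$ on $M$ with $Ric_g\geq -3g$ satisfies $Vol_g(M)\geq \sum_i Vol_{g_{-1}}(V_i)$. Simplicial volume cannot give this (the constant in Gromov's inequality is lossy, as you note), and there is no closed hyperbolic model to map onto: the pure pieces are cusped and are glued to collapsible product pieces, so the Besson--Courtois--Gallot natural map argument does not apply off the shelf; making it work in this glued, cusped setting is a substantial theorem, not a step one can wave at. That theorem exists and is exactly what the paper imports from Connell--Su\'arez-Serrato: Proposition 12 of \cite{CSS19} gives the scale-invariant entropy--volume inequality $h(g)\,Vol_g(M)^{1/4}\geq 3\,(MinVol(M))^{1/4}$, Theorem 4 of \cite{CSS19} computes $MinVol(M)=\sum_i Vol_{g_{-1}}(V_i)$, and Bishop--Gromov gives $h(g)\leq 3$ whenever $Ric_g\geq -3g$; combining these is the paper's Lemma \ref{extralemma}. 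So your gap is fillable, but only by citing this result --- your plan amounts to reproving it. (Your side claim that the lower bound is \emph{strict} whenever a product piece is present is also unjustified --- and unnecessary, since your own equality analysis, $W\equiv 0$ hence constant curvature $-1$, yields the contradiction with $\IZ^3\subset\pi_1(M)$ in all cases.) Finally, your exclusion of the Ricci-flat case rests on a statement that is not a theorem: it is an open problem whether every closed Ricci-flat $4$-manifold is finitely covered by a flat torus or a K3 surface. The fix is cheap: $Ric_g\geq 0$ forces polynomial growth of $\pi_1(M)$ by Milnor's theorem, while the fundamental group of an extended graph manifold grows exponentially (Proposition 6.16 of \cite{LafontBook}).
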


Recall that in real dimension four an Einstein metric need not have constant sectional curvature, and we know a great deal concerning the existence, non-existence, and uniqueness of Einstein metrics. As in this paper we are concerned with obstructions to the existence of Einstein metrics, we simply refer to \cite{Bes87}, \cite{And06} and the very recent \cite{FP20} for some of the beautiful and highly non-trivial examples of Einstein metrics with non-constant sectional curvature. 

Regarding obstructions to the existence of such metrics, recall that the theory of Chern and Weil can be applied to derive the elegant Hitchin–Thorpe inequality \cite{Hit74}, which gives a necessary condition for the existence of an Einstein metric on a closed 4-manifold. LeBrun later pioneered the study of Einstein metrics on $4$-manifolds via the Seiberg–Witten equations, and among other things he was able to produce infinitely many closed $4$-manifolds (even simply connected) which satisfy the Hitchin-Thorpe inequality but nevertheless cannot support Einstein metrics \cite{LeB96}. This line of research has been extremely fruitful and even led to the computation of the Yamabe invariant of most complex surfaces \cite{LeB98}, and to some extent it was also generalized to the non-compact finite-volume setting \cite{Biq97}, \cite{Rol02}, \cite{DiC12}, \cite{DiC13}.

Another refinement of the Hitchin-Thorpe inequality was suggested earlier  by Gromov in \cite{Gro82} using the notion of simplicial volume. Sambusetti \cite{Sam98}, building up on results of Besson-Gallot-Curtois \cite{BCG95}, extended Gromov's obstruction and proved, among other things, that given any pair of integers $(k, t)$ such that $k-t\in 2\IZ$, then there exists an oriented $4$-manifolds $M$ with Euler characteristic $\chi(M)=k$ and signature $\tau(M)=t$ that cannot support an Einstein metric (see Theorem 4.4 in \cite{Sam98}). For a related circle of ideas we also refer to the work of Kotschick \cite{Kot98}. 

Thus, our main result adds to the vast array of known classes of manifolds which do not support Einstein metrics in dimension four. The proof employs some recent volume entropy estimates of Connell-Su\'arez-Serrato \cite{CSS19} for extended graph manifolds, and it is therefore a natural generalization of Sambusetti's work which uses the celebrated volume entropy estimate of Besson-Curtois-Gallot \cite{BCG95} for negatively curved locally symmetric spaces. With that said, we also crucially rely on a lemma of LeBrun (Lemma 8.1. in \cite{LeB99}) specialized to oriented Einstein $4$-manifolds with zero signature and negative cosmological constant. We refer to Section \ref{mainthm} for the details of the proof.

\begin{remark}
	Connell and Su\'arez-Serrato define a class of graph-like manifolds which they call \emph{higher graph manifolds}, see Definition 1 in \cite{CSS19}. The class of higher graph manifolds contains as a sub-class the extended graph manifolds defined by Frigerio, Lafont, and Sisto, see Remark 1 in \cite{CSS19}.  
\end{remark}

\begin{remark}
	For more information regarding the circle of ideas around of the volume entropy estimates in \cite{CSS19}, we refer to Souto's Ph.D. thesis \cite{Sou01}. Finally, for a very detailed treatment of the three-dimensional case the interested reader may also refer to Pieroni's Ph.D. thesis \cite{Pie18} (written under the supervision of Sambusetti), where the volume entropy of non-irreducible $3$-manifolds is also studied.
\end{remark}

Despite the considerable efforts, the study of Einstein metrics on manifolds of dimension $n\geq 5$ remains rather obscure when compared to dimension $n=4$. In fact, no uniqueness or non-existence results are currently known! As graph-like manifolds carry over their aversion to Einstein metrics from dimension three to dimension four, one may wonder whether or not this extends to dimension $n=5$ as well. More generally, we may ask the following.

\begin{question}
	Do extended graph $n$-manifolds with $n\geq 5$ support Einstein metrics?
\end{question}

We note that the volume entropy estimate of Connell-Su\'arez-Serrato \cite{CSS19} holds for extended graph manifolds not necessarily of dimension $n=4$, but unfortunately the other parts of the proof of Theorem \ref{maint} are special to this dimension.

\vspace{0.3 in}
\noindent\textbf{Acknowledgments}. The author would like to thank Professor Claude LeBrun for useful comments on an earlier version of this manuscript, and for generously sharing his knowledge of the subject over the years.\\ \\ \\

\section{Proof of the Main Theorem}\label{mainthm}

In this section, we give the details of the proof of Theorem \ref{maint}.
We follow the notation and curvature normalization adopted in LeBrun's paper \cite{LeB99}. We also refer to \cite{LeB99} as a reference for the necessary background on the interplay between the geometry and topology of $4$-manifolds and Einstein metrics.\\

Recall the Gauss-Bonnet formula for the Euler characteristic of a closed (oriented) $4$-manifold $(M, g)$ is given by
\[
\chi(M)=\frac{1}{8\pi^2}\int_{M}\Big(|W^+|_g^2 +|W^-|_g^2+\frac{s_g^2}{24}-\frac{|\overset{\circ}{Ric}|_g^2}{2}\Big)d\mu_g,
\]
where $W^{\pm}$ are the self-dual and anti-self-dual Weyl curvatures, $s_g$ is the scalar curvature, and $\overset{\circ}{Ric}$ is the trace-free part of the Ricci tensor. Moreover, because of the Hirzebruch signature theorem we can also express the signature of $(M, g)$ as a curvature integral as follows
\[
\tau(M)=\frac{1}{12\pi^2}\int_{M}\big(|W^+|_g^2-|W^-|_g^2\big)d\mu_g.
\]

Next, we define several \emph{minimal volumes} associated to a closed Riemannian manifolds. These were originally inspired by the definition of minimal volume in \cite{Gro82}, see Section 8 in \cite{LeB99} for more details. Given a closed Riemannian manifold $(M, g)$, we denote by $sec_g$ its sectional curvature, and by $Ric_g$ its Ricci tensor.

\begin{define}\label{minvolumes}
Let $M^n$ be a closed smooth manifold. Define the minimal volumes
\begin{itemize}
\item $Vol_{Sec}(M^n):=\inf_{g} \Big\{ Vol_{g}(M)\text{  } | \text{  } sec_g\geq -1\Big\};$
\item $Vol_{Ric}(M^n):=\inf_{g} \Big\{ Vol_{g}(M)\text{  } | \text{  } Ric_g\geq -(n-1)g\Big\};$
\item $Vol_{Scal}(M^n):=\inf_{g} \Big\{ Vol_{g}(M)\text{  } | \text{  } s_g\geq -n(n-1)\Big\}.$
\end{itemize}
Finally, we define the (Gromov) minimal volume of $M$ to be
\[
\text{MinVol(M)}:=\inf_{g} \Big\{ Vol_{g}(M)\text{  } | \text{  } |sec_{g}|\leq 1\Big\}.
\]
\end{define}

\begin{remark}\label{tautological}
	By definition, we have the chain of inequalities
	\[
	MinVol(M)\geq Vol_{Sec}(M)\geq Vol_{Ric}(M)\geq Vol_{Scal}(M)\geq 0.
	\]
\end{remark}

The next lemma is a useful strengthening of the original Hitchin-Thorpe inequality with the addition of a minimal Ricci volume term $Vol_{Ric}$. This lemma was observed in \cite{LeB99}, and the only addition here is an extra rigidity result under the assumption that the underlying $4$-manifold has zero signature and that the sign of the scalar curvature is negative.

\begin{lemma}[cf. Lemma 8.1. in \cite{LeB99}]\label{lebrun}
	Let $(M, g)$ be a $4$-dimensional closed Einstein manifold with negative scalar curvature. We then have
	\[
	2\chi(M)-3|\tau(M)|\geq\frac{3}{2\pi^2}Vol_{Ric}(M),
	\]
	with equality if and only if $g$ is half-conformally flat and it realizes the minimal Ricci volume up to rescaling. Moreover if $\tau(M)=0$ and the equality is achieved, then $M$ is a real-hyperbolic $4$-manifold.
\end{lemma}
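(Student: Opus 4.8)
The plan is to combine the two Chern–Weil integral formulas for $\chi(M)$ and $\tau(M)$ into a single "Gauss-Bonnet-minus-signature" expression, exploit the Einstein condition to kill the trace-free Ricci term, and then recognize the remaining integrand as a volume-type lower bound via the Ricci curvature normalization. Specifically, I would first form the combination $2\chi(M) - 3|\tau(M)|$. Using the displayed formulas, $2\chi(M) = \frac{1}{4\pi^2}\int_M\big(|W^+|_g^2 + |W^-|_g^2 + \frac{s_g^2}{24} - \frac{|\overset{\circ}{Ric}|_g^2}{2}\big)d\mu_g$, and $3|\tau(M)| = \frac{1}{4\pi^2}\big|\int_M(|W^+|_g^2 - |W^-|_g^2)d\mu_g\big|$. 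For a \emph{Einstein} metric the trace-free Ricci tensor $\overset{\circ}{Ric}$ vanishes identically, so that term drops out. After bounding the signed integral by the unsigned one (so that, regardless of the sign of $\tau$, two of the Weyl terms cancel and we are left with a nonnegative multiple of the remaining $|W^{\pm}|^2$ plus the scalar term), I expect to arrive at
\[
2\chi(M) - 3|\tau(M)| \geq \frac{1}{4\pi^2}\int_M\Big(2|W^{\mp}|_g^2 + \frac{s_g^2}{24}\Big)d\mu_g \geq \frac{1}{4\pi^2}\int_M \frac{s_g^2}{24}\,d\mu_g,
\]
where the second inequality simply discards the nonnegative Weyl term.

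The next step is to translate the scalar-curvature integral into a statement about $Vol_{Ric}(M)$. For an Einstein $4$-manifold with negative scalar curvature, the Einstein constant is $\lambda = s_g/4 < 0$, so $Ric_g = \frac{s_g}{4}g$ is a \emph{constant} multiple of the metric and $s_g$ itself is a negative constant. Rescaling the metric so that the Ricci normalization $Ric_g \geq -(n-1)g = -3g$ is saturated (here $n=4$), i.e. choosing the scale where $s_g = -12$, makes $\int_M \frac{s_g^2}{24}d\mu_g = 6\,Vol_g(M)$, and under this normalization $Vol_g(M) \geq Vol_{Ric}(M)$ by Definition \ref{minvolumes}. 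Since $2\chi(M) - 3|\tau(M)|$ is a scale-invariant topological quantity while the right-hand side scales correctly, tracking the powers of the scaling factor carefully should yield exactly $2\chi(M) - 3|\tau(M)| \geq \frac{3}{2\pi^2}Vol_{Ric}(M)$. This establishes the inequality itself, which is the content of LeBrun's original Lemma 8.1.

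For the equality analysis, I would inspect which inequalities above can be tight. Equality forces both discarded nonnegative contributions to vanish: the Weyl term $2|W^{\mp}|_g^2$ must integrate to zero, meaning one of $W^+, W^-$ vanishes identically so that $g$ is \emph{half-conformally flat}; and the volume inequality $Vol_g(M) \geq Vol_{Ric}(M)$ must be an equality, so the (rescaled) Einstein metric realizes the minimal Ricci volume. This is precisely the stated equality clause. The genuinely new piece — and the part I expect to require the most care — is the final rigidity statement: under the additional hypotheses $\tau(M) = 0$ and equality holding, I must conclude $M$ is real-hyperbolic. With $\tau(M) = 0$ the signature integral vanishes, and combined with half-conformal flatness (say $W^- \equiv 0$) the signature formula gives $\int_M |W^+|_g^2 d\mu_g = 0$ as well, whence $W^+ \equiv 0$ too. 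Thus \emph{both} self-dual and anti-self-dual Weyl curvatures vanish, so $g$ is conformally flat; being simultaneously Einstein and conformally flat in dimension four forces constant sectional curvature, and with negative scalar curvature this means $g$ is a metric of constant negative sectional curvature. Hence $(M,g)$ is a real-hyperbolic $4$-manifold.

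The main obstacle, as I see it, is not any single hard estimate but rather the bookkeeping needed to make the equality characterization airtight: one must verify that the Weyl term and the volume term vanish \emph{independently} and that no cross-term or constant slips through the combination of the two Chern–Weil formulas, and one must confirm that the passage from "conformally flat plus Einstein" to "constant curvature" is valid precisely in dimension four (it relies on the vanishing of the full Weyl tensor forcing the curvature operator into the constant-curvature model, together with the Einstein condition fixing the remaining Ricci part). Everything else is a direct manipulation of the Gauss–Bonnet and Hirzebruch integrands under the Einstein simplification $\overset{\circ}{Ric} \equiv 0$.
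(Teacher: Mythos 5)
Your proposal is correct and follows essentially the same route as the paper's proof: combine the Gauss--Bonnet and signature integrands, use the Einstein condition to kill the trace-free Ricci term, rescale to $Ric_g=-3g$ so the scalar term becomes $\frac{3}{2\pi^2}Vol_g(M)\geq\frac{3}{2\pi^2}Vol_{Ric}(M)$, and read off the equality case from the two discarded nonnegative quantities. Your expanded justification of the final rigidity step (using $\tau(M)=0$ and half-conformal flatness to force $W\equiv 0$, hence constant negative curvature) is exactly the standard argument the paper leaves implicit.
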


\begin{proof}
	By using the Gauss-Bonnet and signature formulae, we know that
	\[
	2\chi(M)-3|\tau(M)|=\frac{1}{4\pi^2}\int_{M}\Big(2|W_{\mp}|_g^2+\frac{s_g^2}{24}\Big)d\mu_g,
	\]
	with $\mp$ depending on whether $\tau(M)$ is positive or negative. If $\tau(M)=0$, then there is no difference in selecting the self-dual or anti-self-dual Weyl curvature in the integrand. Next, we rescale the Einstein metric so that
	\[
	Rig_g=-3g \quad \Longrightarrow \quad s_g=-12.
	\]
	Thus, we have
	\[
	2\chi(M)-3|\tau(M)|\geq\frac{3}{2\pi^2}Vol_{g}(M)\geq \frac{3}{2\pi^2}Vol_{Ric}(M),
	\]
	with equality if and only if $g$ is half-conformally flat and it realizes the minimal Ricci volume. Finally, if $\tau(M)=0$ and if $g$ is Einstein and half-conformally flat, then $g$ is real-hyperbolic.
\end{proof}

We now study the Euler characteristic and signature of extended graph $4$-manifolds. We start by considering the case where there are no pure real-hyperbolic pieces, and then we study the case where pure pieces are allowed. 

\begin{lemma}\label{vanishing}
	Let $M$ be a closed extended graph $4$-manifold without pure pieces. We have $\chi(M)=\tau(M)=0$. If $M$ has real-hyperbolic pieces, we have $\tau(M)=0$ and $\chi(M)>0$.
\end{lemma}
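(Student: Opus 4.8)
The plan is to compute $\chi(M)$ and $\tau(M)$ piece by piece, exploiting that every gluing torus $T^3$ is null for the relevant additivity formulas. Write $M=\bigcup_i M_i$, where the $M_i$ are the compact pieces (with toral boundary) glued along their common boundary $3$-tori, and recall that each $M_i$ is either a product (Seifert-like) piece $T^{k}\times V$ with $k\ge 1$, or a pure piece whose interior is a finite-volume real-hyperbolic $4$-manifold with flat toral cusps.

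For the Euler characteristic I would first invoke the inclusion--exclusion gluing formula
\[
\chi(M)=\sum_i \chi(M_i)-\sum_{e}\chi(T^3)=\sum_i \chi(M_i),
\]
since $\chi(T^3)=0$. For a product piece multiplicativity gives $\chi(T^{k}\times V)=\chi(T^{k})\,\chi(V)=0$ because $k\ge 1$. For a pure piece, the compact truncated manifold is homotopy equivalent to the open hyperbolic manifold it contains, and the Chern--Gauss--Bonnet integrand of Definition-level formula reduces, for the constant-curvature metric, to the scalar term alone, giving $\chi(M_i)=\tfrac{3}{4\pi^2}\mathrm{Vol}>0$ (the flat cusps contribute no boundary term). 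Hence $\chi(M)=\sum_{\text{pure}}\chi(M_i)$, which vanishes when there are no pure pieces and is strictly positive the moment a single real-hyperbolic piece appears.

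For the signature I would use Novikov additivity, $\tau(M)=\sum_i \tau(M_i)$, and argue that each piece has vanishing signature. A product piece $T^{k}\times V$ with $k\ge 1$ carries an orientation-reversing self-diffeomorphism obtained by reflecting one circle factor ($z\mapsto \bar z$); this identifies $M_i$ with $\overline{M_i}$ and forces $\tau(M_i)=-\tau(M_i)=0$. For a pure piece I would appeal to the Atiyah--Patodi--Singer signature formula in tandem with the signature integrand $\tfrac{1}{12\pi^2}\int_{M_i}\big(|W^+|_g^2-|W^-|_g^2\big)\,d\mu_g$: the hyperbolic metric is conformally flat, so $W^+=W^-=0$ and the interior term vanishes, while the boundary correction is governed by the $\eta$-invariant of the flat cusp cross-section, which vanishes because a flat $3$-torus admits an orientation-reversing isometry. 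This yields $\tau(M_i)=0$ for every piece and hence $\tau(M)=0$ in all cases, matching both assertions of the lemma.

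The main obstacle is the signature of the pure pieces. Unlike the closed hyperbolic case, the hyperbolic metric is not a Riemannian product near a truncated cusp—it is only the warped product $dt^2+e^{-2t}g_{\mathrm{flat}}$—so applying the APS formula cleanly requires handling the transgression/boundary term and rigorously establishing that the flat toral end contributes no $\eta$-invariant. I expect this to be the delicate point; by contrast the Euler characteristic computation and the reflection argument for the product pieces are routine.
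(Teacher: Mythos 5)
Your argument is correct in outline, but it takes a genuinely different route from the paper. The paper does not decompose $\chi$ and $\tau$ piece by piece at this stage: it invokes the collapsing results of Connell--Su\'arez-Serrato (Theorems 1 and 4 of \cite{CSS19}) to produce a sequence of metrics $g_j$ with $|sec_{g_j}|\leq 1$ that collapses the Seifert-like pieces and gluing regions while converging to the hyperbolic metric on larger and larger portions of the pure pieces, and then reads off both characteristic numbers from the Gauss--Bonnet and signature integrals: the integrands are bounded on the collapsing part (whose volume tends to zero) and equal to $\tfrac{3}{4\pi^2}$ (resp.\ $0$, since $W^{\pm}=0$ for the hyperbolic metric) on the hyperbolic part. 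Your route instead uses Mayer--Vietoris and multiplicativity for $\chi$, iterated Novikov additivity (legitimate here, since all gluings are along closed $3$-tori, so no Wall correction arises) together with the circle-reflection trick for $\tau$ of the Seifert-like pieces, Harder's finite-volume Gauss--Bonnet \cite{Har71} for $\chi$ of the pure pieces---the same input the paper itself uses later, in the proof of the main theorem---and Atiyah--Patodi--Singer for $\tau$ of the pure pieces. What your approach buys is independence from the collapsing machinery of \cite{CSS19} for this lemma; what the paper's approach buys is that it avoids boundary-value index theory entirely, since all its integrals are over the closed manifold $M$.

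The one real gap is exactly the one you flagged: APS requires a product metric near the boundary, while the truncated cusp metric is the warped product $dt^2+e^{-2t}g_{\mathrm{flat}}$. This can be closed cheaply. Any metric of the form $dt^2+f(t)^2g_{\mathrm{flat}}$ on $[0,1]\times T^3$ is conformally flat: substituting $s$ with $ds=dt/f(t)$ rewrites it as $f^2\big(ds^2+g_{\mathrm{flat}}\big)$. So modify the truncated pure piece $V_i$ only inside the cusp neighborhoods, choosing $f(t)=e^{-t}$ on the inner side and $f\equiv\mathrm{const}$ near $\partial V_i$. The resulting metric is a product near the boundary, is conformally flat on the modified region, and is hyperbolic (hence also conformally flat) elsewhere, so $W^{\pm}\equiv 0$ on all of $V_i$ and the signature integrand vanishes identically. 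APS then gives $\tau(V_i)=-\eta(\partial V_i)=0$, the $\eta$-invariant of a flat torus vanishing via the orientation-reversing isometry $x\mapsto -x$, exactly as you say; this computation is essentially the one carried out by D.~Long and A.~Reid (\emph{Geom. Topol.} \textbf{4} (2000), 171--178) for geometric boundaries of hyperbolic $4$-manifolds. With that step supplied, your proof is complete and agrees with the paper's conclusions, including the value $\chi(M)=\tfrac{3}{4\pi^2}\sum_i Vol_{g_{-1}}(V_i)$ implicit in its proof.
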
	
\begin{proof}
	If $M$ has no pure pieces, by Theorem 1 in \cite{CSS19} we know that 
	\[
	MinVol(M)=0.
	\]
	Let $\{g_{j}\}$ be a sequence of metrics on $M$ such that
	\[
	\lim_{j\to\infty}Vol_{g_{j}}(M)= 0,\quad |sec_{g_j}|\leq 1.
	\]
	By using the Hirzebruch signature formula we conclude
	\[
	\tau(M)=\lim_{j\to\infty}\frac{1}{12\pi^2}\int_{M}\big(|W^+|_{g_j}^2-|W^-|_{g_j}^2\big)d\mu_{g_j}=0,
	\]
	and by Gauss-Bonnet
	\[
	\chi(M)=\lim_{j\to\infty}\frac{1}{8\pi^2}\int_{M}\Big(|W^+|_{g_j}^2+|W^-|_{g_j}^2+\frac{s^2_{g_j}}{24}-\frac{|\overset{\circ}{Ric}|^2_{g_{j}}}{2}\Big)d\mu_{g_j}=0.
	\]
	This shows that $\chi(M)=\tau(M)=0$ when $M$ does not contain pure pieces.
	
	Next, let $M$ have $k\geq 1$ real-hyperbolic pieces say $\{(V_i, g_{-1})\}^k_{i=1}$. By Theorem 4 in \cite{CSS19}, we know that
	\[
	MinVol(M)=\sum^k_{i=1}Vol_{g_{-1}}(V_i)=\lim_{j\to\infty}Vol_{g_{j}}(M),
	\]
	where the sequence of metrics $\{g_{j}\}$ is collapsing with bounded curvature the non-pure pieces and the gluing regions, while being identically hyperbolic on larger and larger regions of the real-hyperbolic parts. For the explicit construction of the sequence of metrics $\{g_j\}$, we refer to the proofs of Theorem 3 and Theorem 4 in \cite{CSS19}. Now the real-hyperbolic metric has vanishing Weyl curvature, and as a result we conclude that
	\[
	\lim_{j\to\infty}\frac{1}{12\pi^2}\int_{M}\big(|W^+|_{g_j}^2-|W^-|_{g_j}^2\big)d\mu_{g_j}=0,
	\]
	and 
	\[
	\lim_{j\to\infty}\frac{1}{8\pi^2}\int_{M}\Big(|W^+|_{g_j}^2+|W^-|_{g_j}^2+\frac{s^2_{g_j}}{24}-\frac{|\overset{\circ}{Ric}|^2_{g_{j}}}{2}\Big)d\mu_{g_j}=\frac{3}{4\pi^2}\sum^{k}_{i=1}Vol_{g_{-1}}(V_i)>0.
	\]
	The proof of the lemma is complete.
\end{proof}

Next, we observe that we can explicitly compute the minimal Ricci volume of an extended graph manifold with pure real-hyperbolic pieces.

\begin{lemma}\label{extralemma}
	Let $M$ be an extended graph $4$-manifold with $k\geq 1$ pure real-hyperbolic pieces, say $\{(V_i, g_{-1})\}^k_{i=1}$. We then have
	\[
	Vol_{Ric}(M)=\sum^k_{i=1}Vol_{g_{-1}}(V_i).
	\]
\end{lemma}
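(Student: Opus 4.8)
The plan is to establish the two inequalities
\[
Vol_{Ric}(M)\leq\sum_{i=1}^k Vol_{g_{-1}}(V_i), \qquad Vol_{Ric}(M)\geq\sum_{i=1}^k Vol_{g_{-1}}(V_i)
\]
separately, and then combine them into the asserted equality.

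For the upper bound I would invoke the tautological chain of Remark \ref{tautological}, namely $Vol_{Ric}(M)\leq MinVol(M)$, together with the evaluation $MinVol(M)=\sum_{i=1}^k Vol_{g_{-1}}(V_i)$ supplied by Theorem 4 in \cite{CSS19} (already exploited in the proof of Lemma \ref{vanishing}). Equivalently, and more concretely, one can feed the collapsing sequence $\{g_j\}$ from that proof directly into the definition of $Vol_{Ric}$: since $|sec_{g_j}|\leq 1$ forces $Ric_{g_j}\geq -3 g_j$, each $g_j$ is admissible for the Ricci minimal volume, and $Vol_{g_j}(M)\to\sum_{i=1}^k Vol_{g_{-1}}(V_i)$ then gives the bound.

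The lower bound is the main content, and I would obtain it from a volume entropy comparison. Let $g$ be any metric on $M$ with $Ric_g\geq -3 g$, and let $h(g)$ denote its volume entropy. On the universal cover the Ricci lower bound persists, so Bishop--Gromov volume comparison against hyperbolic space of constant curvature $-1$ yields $h(g)\leq n-1=3$. On the other hand, the volume entropy estimate of Connell-Su\'arez-Serrato \cite{CSS19} for extended graph manifolds provides the matching lower bound
\[
h(g)^4\, Vol_g(M)\geq 3^4\sum_{i=1}^k Vol_{g_{-1}}(V_i),
\]
the constant on the right being precisely the entropy-volume of the finite-volume hyperbolic pieces $(V_i, g_{-1})$, each of which has volume entropy equal to $n-1=3$. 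Combining the two estimates and using $h(g)^4\leq 3^4$,
\[
Vol_g(M)\geq\frac{3^4}{h(g)^4}\sum_{i=1}^k Vol_{g_{-1}}(V_i)\geq\sum_{i=1}^k Vol_{g_{-1}}(V_i),
\]
and taking the infimum over all such $g$ produces $Vol_{Ric}(M)\geq\sum_{i=1}^k Vol_{g_{-1}}(V_i)$.

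The step I expect to be the main obstacle is this lower bound, and specifically the need to invoke the Connell-Su\'arez-Serrato entropy estimate with exactly the sharp constant $(n-1)^n$ multiplying the hyperbolic volumes. It is this matching of constants -- the cusped hyperbolic pieces having volume entropy exactly $n-1$ -- that lets the Bishop--Gromov bound $h(g)\leq n-1$ close the argument to an \emph{equality} rather than to an inequality with a lossy ratio. One must also confirm that the non-pure (Seifert-like) pieces and the gluing regions contribute nothing to the entropy lower bound, which is precisely what restricts the sum to the pure real-hyperbolic pieces $\{V_i\}_{i=1}^k$.
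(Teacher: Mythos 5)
Your proposal is correct and takes essentially the same route as the paper: the upper bound via Remark \ref{tautological} together with $MinVol(M)=\sum_{i=1}^k Vol_{g_{-1}}(V_i)$ (Theorem 4 in \cite{CSS19}), and the lower bound by combining the Bishop--Gromov entropy bound $h(g)\leq 3$ with the Connell--Su\'arez-Serrato entropy--volume estimate (Proposition 12 in \cite{CSS19}), which is precisely your displayed inequality after raising to the fourth power. The paper merely phrases the final step through scale invariance of $h(g)\,Vol_g(M)^{1/4}$ instead of splitting the argument into two explicit inequalities.
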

\begin{proof}
	Consider an extended graph $4$-manifold $M$ with $k\geq 1$ pure real-hyperbolic pieces $\{(V_i, g_{-1})\}^k_{i=1}$, and let $g$ be a smooth Riemannian metric on $M$ normalized so that $Ric_g\geq -3g$. Given $(M, g)$, we define the volume entropy to be
	\begin{align}\label{entropy}
	h(g):=\lim_{R\to\infty}\frac{\log{Vol_{\tilde{g}}(B_{R}(p))}}{R},
	\end{align}
	where $B_{R}(p)$ is a ball in the Riemannian universal cover $(\widetilde{M}, \tilde{g})$. As shown by Manning in \cite{Man79}, the limit in Equation \eqref{entropy} exists and it is independent of the point $p\in\widetilde{M}$. Now any compact Riemannian $4$-manifold  with $Ric_g\geq -3g$ has volume entropy
	\[
	h(g)\leq 3,
	\]
	as follows from Bishop-Gromov volume comparison (see for example Chapter 7 in \cite{Pet16}). By Proposition 12 in \cite{CSS19}, given any such $(M, g)$ we have
	\begin{align}\label{connell}
	h(g)Vol_{g}(M)^{\frac{1}{4}}\geq 3(MinVol(M))^{\frac{1}{4}},
	\end{align}
	where because of Theorem 4 in \cite{CSS19} we have
	\begin{align}\notag
	MinVol(M)=\sum^k_{i=1}Vol_{g_{-1}}(V_i),
	\end{align}
	and where $\{(V_i, g_{-1})\}^k_{i=1}$ are the pure real-hyperbolic pieces of $M$, i.e., finite-volume real-hyperbolic $4$-manifolds with toral cusps equipped with the real-hyperbolic metric whose sectional curvature is normalized to be $-1$. Now the left-hand side in Equation \eqref{connell} is scale invariant, so that for any $g$ with $Ric_{g}\geq -3g$, we have that
	\begin{align}\notag
	Vol_{g}(M)\geq MinVol(M) \quad \Longrightarrow \quad Vol_{Ric}(M)=MinVol(M),
	\end{align}
	see Remark \ref{tautological}.
\end{proof}

We can now prove a non-existence result for Einstein metrics on extended graph $4$-manifolds (\emph{cf}. Theorem \ref{maint} in the Introduction). 

\begin{theorem}
	Let $M$ be a closed extended graph $4$-manifold. Then $M$ cannot admit an Einstein metric.
\end{theorem}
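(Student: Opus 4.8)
The plan is to split the argument according to the type of extended graph $4$-manifold under consideration, matching the three regimes already isolated in Lemmas \ref{vanishing} and \ref{extralemma}. Suppose toward a contradiction that $M$ admits an Einstein metric $g$. First I would dispose of the case where $M$ has \emph{no} pure real-hyperbolic pieces. By Lemma \ref{vanishing} we have $\chi(M)=\tau(M)=0$. For a closed Einstein $4$-manifold the Gauss-Bonnet integrand is pointwise nonnegative, so $\chi(M)=0$ forces $|W^+|_g=|W^-|_g=s_g=\overset{\circ}{Ric}_g=0$ almost everywhere; hence $g$ is flat, $M$ is finitely covered by a torus, and a flat manifold has vanishing simplicial volume and is not an extended graph manifold with a nontrivial graph structure. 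More cleanly, vanishing of the full integrand makes $g$ Ricci-flat and in fact flat, which contradicts the nontrivial topology of an extended graph manifold (its fundamental group is not virtually abelian). This handles the purely Seifert-like case.

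Next I would treat the case where $M$ has at least one pure real-hyperbolic piece. Here I first check the sign of the scalar curvature so that Lemma \ref{lebrun} applies. By Lemma \ref{vanishing} (second part) we have $\tau(M)=0$ and $\chi(M)>0$, and I would invoke the standard fact that a closed Einstein $4$-manifold with $\chi(M)>0$ and containing a finite-volume hyperbolic piece cannot carry a nonnegative Einstein constant — more precisely, a closed extended graph manifold has positive simplicial volume (coming from the hyperbolic pieces) and hence admits no metric of nonnegative scalar curvature by the Gromov-Lawson-type obstruction, so any Einstein metric must have $s_g<0$. With negative scalar curvature established, Lemma \ref{lebrun} gives
\[
2\chi(M)-3|\tau(M)| \geq \frac{3}{2\pi^2} Vol_{Ric}(M).
\]

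Now I would combine the exact computations. Since $\tau(M)=0$, the left-hand side is just $2\chi(M)$, and by Lemma \ref{vanishing} (applied via the collapsing sequence on the non-pure pieces together with the hyperbolic contribution) we have the exact value
\[
\chi(M)=\frac{3}{4\pi^2}\sum_{i=1}^{k}Vol_{g_{-1}}(V_i).
\]
On the other hand Lemma \ref{extralemma} gives $Vol_{Ric}(M)=\sum_{i=1}^{k}Vol_{g_{-1}}(V_i)$. Substituting both into the Hitchin-Thorpe-type inequality yields
\[
\frac{3}{2\pi^2}\sum_{i=1}^{k}Vol_{g_{-1}}(V_i) = 2\chi(M) \geq \frac{3}{2\pi^2}\sum_{i=1}^{k}Vol_{g_{-1}}(V_i),
\]
so the inequality is in fact an \emph{equality}. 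The rigidity clause of Lemma \ref{lebrun} (with $\tau(M)=0$) then forces $g$ to be a real-hyperbolic metric, i.e. $M$ is a closed real-hyperbolic $4$-manifold. This is the desired contradiction: a closed real-hyperbolic manifold is a single pure piece with empty gluing locus, whereas an extended graph manifold by definition has a nontrivial decomposition into pieces glued along incompressible tori, so its fundamental group contains $\IZ^2$ subgroups and cannot be the (Gromov-hyperbolic, hence $\IZ^2$-free) fundamental group of a closed hyperbolic $4$-manifold.

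I expect the main obstacle to be the two "soft" topological inputs that bridge the curvature computations to the combinatorial definition of an extended graph manifold: namely (i) ruling out nonnegative scalar curvature to guarantee $s_g<0$ so that Lemma \ref{lebrun} is applicable, and (ii) translating the rigidity conclusion "$M$ is real-hyperbolic" into a genuine contradiction with the graph structure. Both hinge on the incompressibility of the gluing tori and the resulting $\IZ^2$ subgroups in $\pi_1(M)$, which are incompatible both with Ricci-flatness in the first case and with closed real-hyperbolic geometry in the last case; I would want to cite the definition in \cite{LafontBook} precisely to make these incompatibilities airtight, since everything else reduces to the clean arithmetic of combining Lemmas \ref{vanishing}, \ref{extralemma}, and \ref{lebrun}.
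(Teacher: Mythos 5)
Your proposal follows the same two-case architecture as the paper's proof, and most of the steps coincide. In the case without pure pieces, your direct argument (Einstein plus $\chi(M)=\tau(M)=0$ forces the Gauss--Bonnet integrand, hence the full curvature, to vanish, so $g$ is flat and $\pi_1(M)$ is virtually abelian) is equivalent to the paper's route through Hitchin's theorem and Milnor's growth estimate; just note that ``$\pi_1(M)$ is not virtually abelian'' needs a citation (the paper uses exponential growth, Proposition 6.16 in \cite{LafontBook}), and your parenthetical appeal to simplicial volume here proves nothing, since extended graph manifolds without pure pieces also have vanishing simplicial volume. In the case with pure pieces, your derivation of equality in Lemma~\ref{lebrun} and the use of its rigidity clause to force $g$ to be real-hyperbolic, then contradicting the $\IZ^2$ (in fact $\IZ^3$) subgroups coming from the $\pi_1$-injective gluing tori, is exactly the paper's argument (the paper invokes Preissmann's theorem rather than Gromov-hyperbolicity, and obtains the exact value of $\chi(M)$ from Harder's finite-volume Gauss--Bonnet formula plus Mayer--Vietoris rather than from the collapsing sequence inside the proof of Lemma~\ref{vanishing}; both are fine, though you are citing the \emph{proof} of that lemma, not its statement).

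The one genuine flaw is your justification that the Einstein constant must be negative. You assert that positive simplicial volume obstructs metrics of nonnegative \emph{scalar} curvature ``by the Gromov--Lawson-type obstruction.'' No such theorem exists in dimension four: Gromov--Lawson's obstruction concerns enlargeability and positive scalar curvature, and the statement that $\|M\|>0$ rules out nonnegative scalar curvature is an open conjecture of Gromov, not a result you may cite. Fortunately the step is trivially repairable precisely because the metric is Einstein. If $Ric_g=\lambda g$ with $\lambda\geq 0$, then for every $c>0$ the rescaled metric satisfies $Ric_{cg}=Ric_g\geq 0\geq -3(cg)$, so $Vol_{Ric}(M)\leq\inf_{c>0}c^{2}Vol_g(M)=0$, contradicting $Vol_{Ric}(M)=\sum_{i=1}^{k}Vol_{g_{-1}}(V_i)>0$ from Lemma~\ref{extralemma}; alternatively, $\lambda>0$ is excluded by Bonnet--Myers (finite $\pi_1$ versus the $\IZ^3$ subgroups) and $\lambda=0$ by Milnor's polynomial growth estimate \cite{Mil68} versus exponential growth \cite{LafontBook}, which is exactly the argument the paper itself uses in the other case. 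To your credit, you noticed a point the paper passes over silently --- it rescales to $Ric_g=-3g$ without comment --- but the tool you invoked to handle it is not available, and should be replaced by one of these elementary arguments.
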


\begin{proof}
	 Consider first an extended graph $4$-manifold $M$ with $k\geq 1$ pure real-hyperbolic pieces say $\{(V_i, g_{-1})\}^k_{i=1}$. By Lemma \ref{extralemma}, we know that the minimal Ricci volume of $M$ is equal to sum of the real-hyperbolic volumes of the pure pieces whose sectional curvature is normalized to be $-1$
	 \begin{align}\label{mricci}
	 Vol_{Ric}(M)=\sum^k_{i=1}Vol_{g_{-1}}(V_i).
	 \end{align}
	 By using the finite-volume generalization of the Gauss-Bonnet formula (\emph{cf.} Harder \cite{Har71} and Cheeger-Gromov \cite{CG85}), we know that for any real-hyperbolic piece $(V_i, g_{-1})$
	\begin{align}\label{harder}
	\chi(V_i)=\frac{1}{8\pi^2}\int_{V_i}\frac{s_{g_{-1}}^2}{24}d\mu_{g_{-1}}=\frac{3}{4\pi^2}Vol_{g_{-1}}(V_i).
	\end{align}
    Next, a Mayer-Vietoris argument yields
    \begin{align}\label{vietoris}
    \chi(M)=\sum^k_{i=1}\chi(V_{i}),
    \end{align}
    so that by using Equations \eqref{mricci}, \eqref{harder}, and \eqref{vietoris} we have
    \begin{align}\notag
    2\chi(M)&=\frac{3}{2\pi^2}\sum^k_{i=1}Vol_{g_{-1}}(V_i)\\ \notag
    &=\frac{3}{2\pi^{2}}Vol_{Ric}(M).
    \end{align}
    Now by combining Lemma \ref{lebrun} and Lemma \ref{vanishing}, an Einstein metric $g$ on $M$ (rescaled so that $Ric_g=-3g$) realizes the minimal Ricci volume and it is then real-hyperbolic. The fundamental group of any extended graph $4$-manifold with more than one vertex and with at least one pure real-hyperbolic piece contains an abelian subgroup isomorphic to $\IZ^3$. To see this, recall that the cusps of a pure piece of real-hyperbolic type are toral and $\pi_1$-injective in $\pi_{1}(V_i)$. Now,  $\pi_1(M)$ can be represented as a graph of groups of its pieces, and as such $\pi_{1}(V_i)$ injects in $\pi_1(M)$. For the construction of $\pi_1(M)$ as a graph of groups we refer to Section 2.3. in \cite{LafontBook}. Thus, by Preissmann theorem (see for example Chapter 12 in \cite{doC92}) we have that $M$ cannot support a real-hyperbolic metric, and this concludes the proof in the case when $M$ has pure pieces of real-hyperbolic type.
    
    Finally if $M$ has no pure pieces, by Lemma \ref{vanishing} we know that
    \[
    \chi(M)=\tau(M)=0.
    \]
    Thus if $(M, g)$ is Einstein, it then saturates the Hitchin-Thorpe inequality
    \[
    2\chi(M)\geq 3|\tau(M)|,
    \]
    and as observed by Hitchin in \cite{Hit74} this forces the pull-back of $g$ to some finite cover of $M$ to be either a Ricci-flat K\"ahler metric on a K3 surface or a flat metric on a $4$-torus. By a well-known observation of Milnor (see Theorem 1 in \cite{Mil68}), the growth of $\pi_1(M)$ is then polynomial. On the other hand, the fundamental group of an extended graph manifold grows exponentially, see Proposition 6.16 in \cite{LafontBook}. The proof is complete.
\end{proof}

\vspace{0.1in}


\begin{thebibliography}{ELMNPM}

 
\bibitem[And06]{And06} M. T. Anderson. Dehn filling and Einstein metrics in higher dimensions. \textit{J. Differential Geom. 73} (2006), no. 2, 219-261.

\bibitem[AF09]{AF09} C. S. Aravinda, T. Farrell. Twisted doubles and nonpositive curvature. \textit{Bull. Lond. Math. Soc. 41} (2009), 1053-1059.

\bibitem[Bes87]{Bes87} A. Besse. Einstein manifolds. \textit{Springer-Verlag, Berlin} 1987.

\bibitem[BCG95]{BCG95} G. Besson, G. Courtois, S. Gallot. Entropies and rigidit\'es des espaces localment sym\'etriques de curbure strictment n\'egative. \textit{Geom. Func. Anal, 5} (1995), 731-799.

\bibitem[Biq97]{Biq97} O. Biquard. M\'etriques d'Einstein \`a cusps et \'equations de Seiberg-Witten. \textit{J. Reine Angew. Math. 490} (1997), 129-154.

\bibitem[CG85]{CG85} J. Cheeger, M. Gromov. Bounds on the von Neumann dimension of $L^2$-cohomology and the Gauss-Bonnet theorem for open manifolds. \textit{J. Differential Geom. 21} (1985), no. 1, 1-34.

\bibitem[CSS19]{CSS19} C. Connell, P. Su\'arez-Serrato. On higher graph manifolds. \textit{Int. Math. Res. Not} (2019), no. 5, 1281-1311.

\bibitem[DiC12]{DiC12} L. F. Di Cerbo. Finite-volume complex surfaces without cuspidal Einstein metrics. \textit{Ann. Global Anal. Geom. 41} (2012), no. 3, 371-380.

\bibitem[DiC13]{DiC13} L. F. Di Cerbo. Seiberg-Witten equations on surfaces of logarithmic general type. \textit{Internat. J. Math. 24} (2013), no. 9, 1350074 (23 pages).

\bibitem[doC92]{doC92} M. P. do Carmo. Riemannian Geometry. Mathematics: Theory \& Applications. \textit{ Birkhäuser Boston, Inc., Boston, MA,} 1992.

\bibitem[FP20]{FP20} J. Fine, B. Premoselli. Examples of compact Einstein four-manifolds with negative curvature. \textit{J. Amer. Math. Soc. 33} (2020), 991-1038.

\bibitem[FLS15]{LafontBook} R. Frigerio, J.-F. Lafont, A. Sisto. Rigidity of High Dimensional Graph Manifolds. \textit{Ast\'erisque Volume 372}, {2015}.

\bibitem[Gro82]{Gro82} M. Gromov. Volume and bounded cohomology. \textit{Publ. Math. IHES 56} (1982), 5-99.

\bibitem[Har71]{Har71} G. Harder. A Gauss-Bonnet formula for discrete arithmetically defined groups. \textit{Ann. Sci. \'Ecole Normale Sup. 4} (1971), 409-455.

\bibitem[Hei76]{Hei76} E. Heintze. Mannigfaltigkeiten negativer Kr\"ummung. \textit{Habilitationsschrift, Univerit\"at Bonn}, 1976.

\bibitem[Hit74]{Hit74} N. Hitchin. On compact four-dimensional Einstein manifolds. \textit{J. Differential Geom. 9} (1974), 435-442.

\bibitem[Kot98]{Kot98} D. Kotschick. On the Gromov-Hitchin-Thorpe inequality. \textit{C. R. Acad. Sci. Paris 326} (1998), 727-731.

\bibitem[LeB96]{LeB96} C. LeBrun. Four-manifolds without Einstein metrics. \textit{Math. Res. Lett. 3} (1996), 133-147.

\bibitem[LeB98]{LeB98} C. LeBrun. Kodaira  dimension  and  the  Yamabe  problem. \textit{Comm. Anal. Geom. 7} (1998), 133-156.

\bibitem[LeB99]{LeB99} C. LeBrun. Four-dimensional Einstein manifolds, and beyond. \textit{Surveys in differential geometry: essays on Einstein manifolds}, 247–285, Surv. Differ. Geom., 6, \textit{Int. Press, Boston, MA,} 1999.

\bibitem[LW01]{LW01} Essays on Einstein manifolds. Editors C. LeBrun and M. Wang. \textit{Surveys in Differential Geometry, Volume 6} (2001).

\bibitem[LK08]{LK08} J. Lott, B. Kleiner. Notes on Perelman's papers. \textit{Geom. Topol. 12} (2008), no. 5, 2587-2855.

\bibitem[Man79]{Man79} A. Manning. Topological entropy for geodesic flows. \textit{Ann. of Math. (2) 110} (1979), no. 3, 567-573.

\bibitem[Mil68]{Mil68} J. Milnor. A note on curvature and the fundamental group. \textit{J. Differential Geom. 2} (1968), 1-7.

\bibitem[Ont02]{Ont02} P. Ontaneda. The double of a hyperbolic manifold and non-positively curved exotic PL structures. \textit{Trans. Amer. Math. Soc. 355} (2002), no. 3, 935-965. 

\bibitem[Pet16]{Pet16} P. Petersen. Riemannian Geometry. Third edition. Graduate Texts in Mathematics, 171. \textit{Springer, Cham}, 2016.  xviii+499 pp.

\bibitem[Pie18]{Pie18} E. Pieroni. Minimal entropy of $3$-manifolds. \textit{PhD Thesis, University of Rome ``La Sapienza''}, 2018.

\bibitem[Rol02]{Rol02} Y. Rollin. Surfaces K\"ahl\'eriennes de  volume  fini et \'equationes de Seiberg-Witten. \textit{Bull. Soc. Math. France (3) 130} (2002), 409-456.

\bibitem[Sam98]{Sam98} A. Sambusetti. An obstruction to the existence of Einstein metric on $4$-manifolds. \textit{Math. Annalen 311} (1998), 533-547.

\bibitem[Sou01]{Sou01} J. Souto. Minimal volume and minimal entropy. \textit{PhD Thesis - Part II, Bonn University}, 2001.




\end{thebibliography}
\end{document}